\numberwithin{equation}{section}
\begin{document}

\newtheorem{thm}{Theorem}[section]
\newtheorem{prop}[thm]{Proposition}
\newtheorem{lem}[thm]{Lemma}
\newtheorem{cor}[thm]{Corollary}
\newtheorem{rem}[thm]{Remark}
\newtheorem*{defn}{Definition}

\newcommand{\DD}{\mathbb{D}}
\newcommand{\NN}{\mathbb{N}}
\newcommand{\ZZ}{\mathbb{Z}}
\newcommand{\QQ}{\mathbb{Q}}
\newcommand{\RR}{\mathbb{R}}
\newcommand{\CC}{\mathbb{C}}
\renewcommand{\SS}{\mathbb{S}}

\renewcommand{\theequation}{\arabic{section}.\arabic{equation}}

\newcommand{\supp}{\mathop{\mathrm{supp}}}    

\newcommand{\re}{\mathop{\mathrm{Re}}}   
\newcommand{\im}{\mathop{\mathrm{Im}}}   
\newcommand{\dist}{\mathop{\mathrm{dist}}}  
\newcommand{\link}{\mathop{\circ\kern-.35em -}}
\newcommand{\spn}{\mathop{\mathrm{span}}}   
\newcommand{\ind}{\mathop{\mathrm{ind}}}   
\newcommand{\rank}{\mathop{\mathrm{rank}}}   
\newcommand{\Fix}{\mathop{\mathrm{Fix}}}   
\newcommand{\codim}{\mathop{\mathrm{codim}}}   
\newcommand{\conv}{\mathop{\mathrm{conv}}}   
\newcommand{\epsi}{\mbox{$\varepsilon$}}
\newcommand{\eps}{\mathchoice{\epsi}{\epsi}
{\mbox{\scriptsize\epsi}}{\mbox{\tiny\epsi}}}
\newcommand{\cl}{\overline}
\newcommand{\pa}{\partial}
\newcommand{\ve}{\varepsilon}
\newcommand{\zi}{\zeta}
\newcommand{\Si}{\Sigma}
\newcommand{\cA}{{\mathcal A}}
\newcommand{\cG}{{\mathcal G}}
\newcommand{\cH}{{\mathcal H}}
\newcommand{\cI}{{\mathcal I}}
\newcommand{\cJ}{{\mathcal J}}
\newcommand{\cK}{{\mathcal K}}
\newcommand{\cL}{{\mathcal L}}
\newcommand{\cN}{{\mathcal N}}
\newcommand{\cR}{{\mathcal R}}
\newcommand{\cS}{{\mathcal S}}
\newcommand{\cT}{{\mathcal T}}
\newcommand{\cU}{{\mathcal U}}
\newcommand{\OM}{\Omega}
\newcommand{\B}{\bullet}
\newcommand{\ol}{\overline}
\newcommand{\ul}{\underline}
\newcommand{\vp}{\varphi}
\newcommand{\AC}{\mathop{\mathrm{AC}}}   
\newcommand{\Lip}{\mathop{\mathrm{Lip}}}   
\newcommand{\es}{\mathop{\mathrm{esssup}}}   
\newcommand{\les}{\mathop{\mathrm{les}}}   
\newcommand{\nid}{\noindent}
\newcommand{\pzr}{\phi^0_R}
\newcommand{\pir}{\phi^\infty_R}
\newcommand{\psr}{\phi^*_R}
\newcommand{\pow}{\frac{N}{N-1}}
\newcommand{\ncl}{\mathop{\mathrm{nc-lim}}}   
\newcommand{\nvl}{\mathop{\mathrm{nv-lim}}}  
\newcommand{\la}{\lambda}
\newcommand{\La}{\Lambda}    
\newcommand{\de}{\delta}    
\newcommand{\fhi}{\varphi} 
\newcommand{\ga}{\gamma}    
\newcommand{\ka}{\kappa}   

\newcommand{\core}{\heartsuit}
\newcommand{\diam}{\mathrm{diam}}

\newcommand{\lan}{\langle}
\newcommand{\ran}{\rangle}
\newcommand{\tr}{\mathop{\mathrm{tr}}}
\newcommand{\diag}{\mathop{\mathrm{diag}}}
\newcommand{\dv}{\mathop{\mathrm{div}}}

\newcommand{\al}{\alpha}
\newcommand{\be}{\beta}
\newcommand{\Om}{\Omega}
\newcommand{\na}{\nabla}

\newcommand{\cC}{\mathcal{C}}
\newcommand{\cM}{\mathcal{M}}
\newcommand{\nr}{\Vert}
\newcommand{\De}{\Delta}
\newcommand{\cX}{\mathcal{X}}
\newcommand{\cP}{\mathcal{P}}
\newcommand{\om}{\omega}
\newcommand{\si}{\sigma}
\newcommand{\te}{\theta}
\newcommand{\Ga}{\Gamma}

\title[An interpolating inequality]{An interpolating inequality for solutions of uniformly elliptic equations}

\author{Rolando Magnanini} 
\address{Dipartimento di Matematica ed Informatica ``U.~Dini'',
Universit\` a di Firenze, viale Morgagni 67/A, 50134 Firenze, Italy.}
    \email{magnanini@unifi.it}
    \urladdr{http://web.math.unifi.it/users/magnanin}

\author{Giorgio Poggesi}
\address{Department of Mathematics and Statistics, The University of Western Australia, 35 Stirling Highway, Crawley, Perth, WA 6009, Australia}
    \email{giorgio.poggesi@uwa.edu.au}
    \urladdr{https://research-repository.uwa.edu.au/en/persons/giorgio-poggesi-2}

%
%


\begin{abstract}
We extend an inequality for harmonic functions, obtained in \cite{MP3, PogTesi}, to the case of solutions of uniformly elliptic equations in divergence form, with merely measurable coefficients. The inequality for harmonic functions turned out to be a crucial ingredient in the study of the stability of the radial symmetry for Alexandrov's Soap Bubble Theorem and Serrin's problem. The proof of our inequality is based on a mean value property for elliptic operators stated and proved in \cite{Ca} and \cite{BH}. 
\end{abstract}

\keywords{Interpolation inequality, elliptic operators in divergence form, mean value property,  Serrin's overdetermined problem, Alexandrov Soap Bubble Theorem, stability, quantitative estimates}
\subjclass[2010]{35A23, 35A15, 35B05, 35D30, 35J15}
%
%
%

\maketitle

\raggedbottom

\subsection*{Copyright Statement} This is a preprint of the following chapter: 
\\
R. Magnanini and G. Poggesi (2021) An Interpolating Inequality for Solutions of Uniformly Elliptic Equations. In: V. Ferone, T. Kawakami, P. Salani, F. Takahashi (eds) Geometric Properties for Parabolic and Elliptic PDE's. Springer INdAM Series, vol 47. Springer, Cham. Reproduced with the permission of Springer Nature Switzerland AG 2021. The final authenticated version is available at \url{https://doi.org/10.1007/978-3-030-73363-6_11}  

\section{Introduction}

Let $\Om$ be a bounded domain in $\RR^N$, $N\ge 2$, and denote its boundary by $\Ga$. The volume of $\Om$ and the $(N-1)$-dimensional Hausdorff measure of $\Ga$ will be denoted, indifferently, by $|\Om|$ and $|\Ga|$. Let $A(x)$ be an $N\times N$ symmetric matrix whose entries $a_{ij}(x)$, $i,j=1,\dots, N$, are measurable functions in $\Om$. We assume that $A(x)$ satisfies the (uniform) ellipticity condition:
\begin{equation}
\label{ellipticity}
\la\,|\xi|^2 \le \lan A(x)\,\xi, \xi\ran \le \La\,|\xi|^2 \ \mbox{ for any } \ x\in\Om, \ \xi\in\RR^N.
\end{equation}
Here, $\la$ and $\La$ are positive constants.
Associated to $A(x)$ we consider a uniformly elliptic linear operator $L$ in divergence form, defined formally by
\begin{equation}
\label{operator}
Lv=\dv [ A(x)\,\na v],
\end{equation}
for every $x\in\Om$.
\par
In what follows, we shall use two scaling invariant quantities: for $1\le p\le\infty$ the number $\nr v\nr_{p,\Om}$ will denote the $L^p$-norm of a measurable function $v:\Om\to\RR$ with respect to the normalized Lebesgue measure $dx/|\Om|$
and, for $0<\al\le 1$, we define the scaling invariant H\"older seminorm
\begin{equation}
\label{seminorm}
[v]_{\al,\Om}=\sup \left\{\left(\frac{d_\Om}{2}\right)^\al\frac{|v(x_1) - v(x_2)|}{|x_1 - x_2|^\al} \, : \,  x_1,x_2 \in \ol{\Om}, \, x_1 \neq x_2 \right\},
\end{equation}
where $d_\Om$ is the diameter of $\Om$. Also, the mean value of $v$ on $\Om$ will be indicated by $v_\Om$.  \par
For $0<\al\le 1$, we let $\Si_\al(\Om)$ be the set of weak solutions $v$ of class $C^{0,\al} (\ol{\Om})$ of $Lv=0$ in $\Om$. We denote by $B_r$ and $S_r$ the ball and sphere of radius $r$ centered at the origin. 
To avoid unessential technicalities, we state here our main result in the case in which $\Om$ is a ball. The case of general domains will be treated later on.
%
%

\begin{thm}
\label{thm:ball}
Take $p\in [1,\infty)$. There exists a positive constant $K$
%
%
such that,  for any $v\in\Si_\al(B_r)$, it holds that
\begin{equation}
\label{eq:ball-Lp-stability-generic-v-2}  
\max_{S_r} v - \min_{S_r} v \le K\, [v]_{\al,B_r}^{ \frac{N}{N+ \al \, p} } \, \nr v-v_{B_r}\nr_{p, B_r}^\frac{\al p}{N+ \al p}.
\end{equation} 
\par
Moreover, \eqref{eq:ball-Lp-stability-generic-v-2} is optimal in the sense that the equality sign holds for some $v\in\Si_\al(B_r)$. Finally, we have that
\begin{equation}
\label{eq:ball_costante_a_N}
K\le 2 \left( 1 +\frac{ \al p}{N} \right) \left( \frac{N}{\al p}\right)^{\frac{\al p}{N+ \al p}} \,\left( \frac{C}{c} \right)^\frac{\al N}{N+\al p},
\end{equation}
where $c, C$, with $c\le C$, are two constants that only depend on $N, \la$ and $\La$. 
\end{thm}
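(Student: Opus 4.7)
The proof will combine the Caffarelli--Blank--Hao mean value property from \cite{Ca, BH} with the H\"older continuity encoded in $[v]_{\al,B_r}$, and then balance the two by optimising a scale parameter $\rho>0$. This is the natural adaptation of the harmonic-function argument in \cite{MP3, PogTesi}: the concentric sets $D_\rho(x)$ of \cite{Ca, BH}, satisfying $B_{c\rho}(x)\subset D_\rho(x)\subset B_{C\rho}(x)$ with $c,C$ depending only on $N,\la,\La$, play the role previously played by Euclidean balls, and the mean value identity $v(x)=|D_\rho(x)|^{-1}\int_{D_\rho(x)}v\,dy$ holds for every solution of $Lv=0$ as long as $D_\rho(x)\subset B_r$.

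Let $x_1,x_2\in S_r$ realise the maximum and the minimum of $v$ on $S_r$. For a free parameter $\rho>0$ I would step inward to the interior points $y_i:=(1-C\rho/r)\,x_i$, so that $D_\rho(y_i)\subset B_{C\rho}(y_i)\subset\ol{B_r}$. Two estimates then combine. From \eqref{seminorm},
\[
|v(x_i)-v(y_i)|\le [v]_{\al,B_r}\,r^{-\al}\,(C\rho)^\al,
\]
while subtracting the constant $v_{B_r}$ in the mean value identity and applying Jensen's inequality in $L^p$ gives
\[
|v(y_i)-v_{B_r}|\le\Bigl(\frac{|B_r|}{|D_\rho(y_i)|}\Bigr)^{1/p}\nr v-v_{B_r}\nr_{p,B_r}\le\Bigl(\frac{r}{c\rho}\Bigr)^{N/p}\nr v-v_{B_r}\nr_{p,B_r}.
\]
Writing $\max_{S_r}v-\min_{S_r}v=v(x_1)-v(x_2)$ and adding the two estimates for $i=1,2$ produces the bound
\[
\max_{S_r}v-\min_{S_r}v\le A\rho^\al+B\rho^{-N/p},\qquad A=2C^\al [v]_{\al,B_r}\,r^{-\al},\quad B=2(r/c)^{N/p}\nr v-v_{B_r}\nr_{p,B_r}.
\]

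A one-variable calculus minimisation of $A\rho^\al+B\rho^{-N/p}$ yields the value $(1+q)\,q^{-q/(1+q)}A^{1/(1+q)}B^{q/(1+q)}$ with $q=\al p/N$. In this combination the powers of $r$ cancel, because $\tfrac{N}{N+\al p}\cdot(-\al)+\tfrac{\al p}{N+\al p}\cdot\tfrac{N}{p}=0$, and a routine rearrangement reproduces the explicit constant in \eqref{eq:ball_costante_a_N}. For the optimality statement I would exhibit a specific $v\in\Si_\al(B_r)$ saturating both elementary estimates at the optimising value of $\rho$, following the extremal used for the harmonic case in \cite{MP3, PogTesi}.

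The main obstacle is that the mean value identity is only available at interior points with $D_\rho(y_i)\subset B_r$, which forces a H\"older detour from $x_i\in S_r$ to $y_i$; balancing the cost of this detour ($\sim\rho^\al$) against the $L^p$-cost ($\sim\rho^{-N/p}$) coming from Jensen's inequality is precisely what dictates the interpolation exponents $N/(N+\al p)$ and $\al p/(N+\al p)$ appearing in \eqref{eq:ball-Lp-stability-generic-v-2}. A secondary technical point is that the mean value identity must be used in the weak, measurable-coefficient setting of \cite{Ca, BH}, but once this is imported the whole argument is elementary in the selected $\rho$.
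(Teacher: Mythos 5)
The first half of your argument is essentially the paper's Lemma \ref{lem:ball}: step from the extremal boundary points $x_1,x_2\in S_r$ to interior points at distance $\si$ from the boundary, pay $[v]_{\al,B_r}(\si/r)^\al$ for the detour, control the interior values through Theorem \ref{thm:meanCaffarelli} plus H\"older/Jensen using $|D_{\si/C}(y_j)|\ge |B|(c\si/C)^N$, and optimize in the scale; your constant bookkeeping does reproduce \eqref{eq:ball_costante_a_N}. One omission there: the two-term bound analogous to \eqref{eq:ball_stepprimadellasceltadisigma} is only valid for scales with $y_j\in B_r$ and $D\subset B_r$ (in the paper, $0<\si<r$), so you may not simply plug in the unconstrained minimizer; when it falls outside the admissible range the paper argues separately, using $\max_{S_r}v-\min_{S_r}v\le 2^\al[v]_{\al,B_r}\le 2^\al[v]_{\al,B_r}(\si^*/r)^\al$ and the elementary inequality $2^\al\le 2(1+\al p/N)$, to see that the same constant still works. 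This is a small, fixable gap.

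The genuine gap is the optimality clause. In Theorem \ref{thm:ball}, $K$ is the \emph{best} constant and the assertion is that equality in \eqref{eq:ball-Lp-stability-generic-v-2} is \emph{attained} by some $v\in\Si_\al(B_r)$; the explicit expression \eqref{eq:ball_costante_a_N} is only an upper bound for $K$, not its value. Your plan to ``exhibit a specific $v$ saturating both elementary estimates at the optimising $\rho$'' cannot work as stated: the H\"older-detour estimate and the mean-value/Jensen estimate are not simultaneously sharp for any solution, no explicit extremal is known even in the harmonic case (none appears in \cite{MP3, PogTesi}, which only prove rougher versions of the inequality), and in the measurable-coefficient setting the sets $D_\rho$ are not even explicit. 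The paper instead proves attainment by a variational compactness argument: it defines $K$ as the supremum of $\max_{S_r}v-\min_{S_r}v$ over $v\in\Si_\al(B_r)$ with $[v]_{\al,B_r}^{N/(N+\al p)}\nr v-v_{B_r}\nr_{p,B_r}^{\al p/(N+\al p)}\le 1$ (finite by Lemma \ref{lem:ball}), takes a maximizing sequence, extracts a weakly convergent subsequence in $L^p(B_r)$, upgrades to locally uniform convergence via the mean value property, uses the \emph{converse} of the mean value theorem (\cite[Theorem 1.2]{ArmB}) to conclude that the limit solves $Lv=0$, uses weak lower semicontinuity of the $L^p$-norm to preserve the constraint and deduce a $C^{0,\al}(\ol{B_r})$ bound for the limit, and finally shows uniform convergence on $S_r$ so that the oscillation passes to the limit. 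Without an argument of this type (or some other proof that an extremal exists), the ``equality holds for some $v$'' part of the theorem remains unproved in your proposal.
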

\par
We recall that, by De Giorgi-Nash-Moser's theorem, we have that a solution of $Lu=0$ is locally of class $C^{0,\al} (\Om)$ for some $\al\in (0,1]$ that depends on $N, \la$ and $\La$. Moreover
%
%
that regularity can be extended up to the boundary provided
%
%
$u$ is H\"older-continuous on $\Ga$
and $\Ga$ is sufficiently smooth -- e.g., $\Ga$ satisfies a uniform exterior cone condition (see \cite[Theorem 8.29]{GT}) or, more in general, condition (A) defined in \cite[pag. 6]{LU} (see \cite[Theorem 1.1 of Chapter 4]{LU}). 
\par

The reader's attention should be focused on the quantitative character of \eqref{eq:ball-Lp-stability-generic-v-2}. This says that the oscillation of a solution of an elliptic equation can be controlled, up to the boundary, by its $L^p$-norm in the domain, provided some a priori information is given on its
H\"older seminorm. 
\par
The effectiveness of an inequality like \eqref{eq:ball-Lp-stability-generic-v-2} can be understood from an important application of it, that was first given in \cite{MP}, and then refined in \cite{MP2, MP3, Pog, PogTesi} (see also \cite{Ma} for a survey on those issues). There, rougher versions of \eqref{eq:ball-Lp-stability-generic-v-2} for harmonic functions were used to obtain quantitative rigidity estimates for the spherical symmetry in two celebrated problems in differential geometry and potential theory: Alexandrov's Soap Bubble Theorem and Serrin's overdetermined problem (see \cite{Al1, Al1transl, Al2, Se} for the original rigidity results). 
%
%
Another application of an inequality like \eqref{eq:ball-Lp-stability-generic-v-2} can be found in \cite{DPV}.
\par
Theorem \ref{thm:ball} improves the result obtained in \cite[Lemma 3.14]{PogTesi} (and hence the previous ones) from various points of view. As already mentioned, it extends the analogous estimates obtained for harmonic functions to the case of a uniformly elliptic linear operator in divergence form with \textit{merely} measurable coefficients. Moreover, it removes the restriction of smallness of the term $\nr v-v_\Om \nr_{p, B}$ that was present in the previous inequalities. In doing so, it clears up which are the essential ingredients to consider to obtain a best possible bound. Finally, it also relaxes the former Lipschitz assumption on the solutions to a weaker H\"older continuous a priori information. 
\par
The proof of the existence of the constant $K$ in \eqref{eq:ball-Lp-stability-generic-v-2} is obtained by a quite standard variational argument. The necessary compactness of the optimizing sequence is derived from a rougher version of \eqref{eq:ball-Lp-stability-generic-v-2}, which it is proved in Lemma \ref{lem:ball}.
The proof of this lemma extends the arguments, first used in \cite{MP} and refined in \cite{MP2, MP3, PogTesi} for harmonic functions, to the case of an elliptic operator.
The crucial ingredient to do so is a mean value theorem for elliptic equations in divergence form (see Theorem \ref{thm:meanCaffarelli}) the proof of which is sketched in \cite[Remark at page 9]{Ca} and given with full details in \cite[Theorem 6.3]{BH}.
\par
The proof of Theorem \ref{thm:ball} is given in Section \ref{sec:ball}. There, we also provide a proof for the case of smooth domains. In this case, the constant $K$ also
depends on the ratio between the diameter and the radius
of a uniform interior touching ball for the relevant domain. 
In Section \ref{sec:cone-john}, we show that the proof's scheme can be extended to two instances of non-smooth domains: those satisfying either the uniform interior cone condition or the so-called local John's condition. The dependence of $K$ on the relevant parameters follows accordingly.

\section{The inequality in a ball and in smooth domains}
\label{sec:ball}

We recall the already mentioned result introduced by L. Caffarelli \cite[Remark on page 9]{Ca}, the proof of which is provided in full details in \cite[Theorem 6.3]{BH}.
In what follows, $B_r (x_0)$ denotes the ball of radius $r$ centered at $x_0$.

%
%

\begin{thm}[Mean Value Property for Elliptic Operators]
\label{thm:meanCaffarelli}
Let $\Om$ be an open subset of $\RR^N$. Let $L$ be the elliptic operator defined by \eqref{ellipticity}-\eqref{operator} and pick any $x_0 \in \Om$.
Then, there exist two constants $c$, $C$ that only depend on $N, \la$ and $\La$, and,
for
$0<r< r_0$ with $r_0 \ge \dist(x_0,\Ga)/C$,
%
%
an increasing family of domains $D_r(x_0)$ which satisfy the properties:
\begin{enumerate}[(i)]
\item $B_{c r} (x_0) \subset D_r(x_0) \subset B_{C r} (x_0)$;
%
%
\item  for any $v$ satisfying $Lv \ge 0$, we have that
\begin{equation}
\label{eq:meanvalueprop}
v(x_0) \le \frac{1}{| D_r(x_0) |} \int_{D_r(x_0)} v(y) \, dy \le \frac{1}{| D_\rho (x_0) |} \int_{D_\rho(x_0)} v(y) \, dy,
\end{equation}
for any $0<r<\rho< r_0$.
\end{enumerate}
\end{thm}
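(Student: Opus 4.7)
I would follow the obstacle-problem approach of Caffarelli, carried out in detail by Blank and Hao. The key idea is to construct simultaneously, for each $r > 0$, a domain $D_r(x_0)$ and a nonnegative function $W_r$ supported in $\overline{D_r(x_0)}$ such that
\[
-LW_r = \de_{x_0} - \frac{1}{|D_r(x_0)|}\chi_{D_r(x_0)}
\]
in the distributional sense. Given such $W_r$, testing the weak formulation of $Lv \ge 0$ against the admissible $W_r \ge 0$ and integrating by parts yields
\[
0 \le \int W_r\, Lv\, dy = \int v\, LW_r\, dy = -v(x_0) + \frac{1}{|D_r(x_0)|}\int_{D_r(x_0)} v\, dy,
\]
which is exactly the mean value inequality in (ii). The monotonicity (second inequality of (ii)) follows by applying the same identity to the nonnegative difference of two balayage functions for $r < \rho$.

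To construct $D_r(x_0)$ and $W_r$, fix a large ball $B_R(x_0) \Subset \Om$ with $R$ comparable to $\dist(x_0, \Ga)$ and build the Green's function $G(\cdot, x_0)$ of $L$ in $B_R(x_0)$ via Stampacchia's duality method. The Littman--Stampacchia--Weinberger two-sided estimate
\[
\frac{c_1}{|y-x_0|^{N-2}} \le G(y, x_0) \le \frac{c_2}{|y-x_0|^{N-2}} \qquad (N \ge 3),
\]
with constants depending only on $N, \la, \La$ (and the analogous logarithmic bound in $N = 2$), is the key ingredient for (i). For $\mu > 0$ large, let $W_\mu^{\mathrm{obs}}$ be the solution of the obstacle problem with obstacle $G(\cdot, x_0) - \mu$ in $B_R(x_0)$, and set $D(\mu) := \{W_\mu^{\mathrm{obs}} = G(\cdot, x_0) - \mu\}$; the two-sided bound translates into $B_{c\mu^{-1/(N-2)}}(x_0) \subset D(\mu) \subset B_{C\mu^{-1/(N-2)}}(x_0)$, giving (i) after reparametrising $\mu = \mu(r)$. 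The balayage function is then $W_r := G(\cdot, x_0) - V_r$, where $V_r$ is the $L$-Newtonian potential of $\chi_{D_r(x_0)}/|D_r(x_0)|$ in $B_R(x_0)$; the complementarity of the obstacle problem guarantees that $W_r$ is nonnegative and vanishes identically outside $D_r(x_0)$, which is the Newton-type shell theorem adapted from the Laplacian setting.

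The main obstacle is making this Newton-type balayage identity rigorous with merely measurable coefficients. The low regularity of $A$ means $G(\cdot, x_0)$ is only Hölder continuous, the free boundary $\pa D_r(x_0)$ is not classically smooth, and $\na G$ is only weakly defined; the identity $W_r \equiv 0$ outside $D_r(x_0)$ (automatic in the Laplacian case thanks to Newton's shell theorem) has to be extracted from the complementary dual of $W_\mu^{\mathrm{obs}}$, and one must check that the distributional $LW_r$ carries no singular part on the free boundary. The rigorous treatment requires: Stampacchia's duality construction of $G$; the Littman--Stampacchia--Weinberger bound (itself based on De~Giorgi--Nash--Moser regularity); the classical variational-inequality theory producing $W_\mu^{\mathrm{obs}}$; and an approximation argument mollifying $A$ to smooth $A_\ve$ with uniform De~Giorgi--Nash--Moser estimates, to pass to the distributional limit. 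This last step is where the bulk of the work in the Blank--Hao proof lies.
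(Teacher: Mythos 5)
The paper does not prove this theorem: it imports it verbatim from Caffarelli's remark and from Blank--Hao, where it is established precisely by the obstacle-problem/balayage construction you outline (Green's function of $L$ via Stampacchia, Littman--Stampacchia--Weinberger bounds for the inclusions in (i), coincidence sets as the $D_r(x_0)$, and testing the nonnegative balayage function against $Lv\ge 0$ for (ii), with monotonicity from the ordering of the family). So your proposal is a faithful sketch of the very argument the paper relies on, with the technical burdens (low regularity of $A$, the shell-type identity outside $D_r$, approximation of the coefficients) correctly identified as the substance of the cited proof.
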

Issues  related to this theorem and the study of the geometric properties of the sets $D_r(x_0)$ 
%
%
have been recently studied by I. Blank and his collaborators in \cite{ArmB, AryB, BBL}.

%
%
%
%

\subsection{The inequality for a ball}
 We begin our presentation by considering the
case of a ball. This will avoid extra technicalities. We will later show how to
extend our arguments to other types of domains.
\par
The  following lemma gives a rough estimate for sub-solutions of the elliptic equation $Lv=0$.

\begin{lem}
\label{lem:ball}
Take $p\ge 1$. Let $v\in C^{0,\al} (\ol{B_r})$, $0<\al\le 1$, be a weak solution of $Lv \ge 0$ in $B_r$.
Then we have that
\begin{equation}
\label{eq:ball-Lp-stability-generic-v}  
\max_{S_r} v - \min_{S_r} v \le 
2 \left( 1 +\frac{ \al p}{N} \right) \left( \frac{N}{\al p}\right)^{\frac{\al p}{N+ \al p}}
\,\left( \frac{C}{c} \right)^\frac{\al N}{N+\al p}  [v]_{\al,B_r}^{ \frac{N}{N+ \al \, p} } \, \nr v-v_{B_r} \nr_{p, B_r}^\frac{\al p}{N+ \al p}.
\end{equation} 
\end{lem}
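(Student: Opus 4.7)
My plan is to combine the mean-value inequality of Theorem~\ref{thm:meanCaffarelli} with the H\"older continuity of $v$ and to reduce the resulting estimate to a one-parameter minimisation that can be solved explicitly. Let $x_1,x_2\in S_r$ realise $\max_{S_r} v$ and $\min_{S_r} v$, respectively, and, for parameters $\rho>0$ and $\eta\ge C\rho$, set $y_i:=(1-\eta/r)\,x_i\in B_r$, so that $\dist(y_i,S_r)=\eta$; Theorem~\ref{thm:meanCaffarelli} then applies at $y_i$ with mean-value radius $\rho$, and the inclusion $D_\rho(y_i)\subset B_{C\rho}(y_i)\subset B_r$ is valid.

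\textbf{The two one-sided estimates.} For $x_1$ I would combine the subsolution mean-value inequality $v(y_1)\le\frac{1}{|D_\rho(y_1)|}\int_{D_\rho(y_1)}v\,dy$ with the H\"older bound $v(x_1)-v(y_1)\le[v]_{\al,B_r}(\eta/r)^\al$; after subtracting $v_{B_r}$, Jensen's inequality together with the volume estimate $|D_\rho(y_1)|\ge c^N|B_\rho|$ from Theorem~\ref{thm:meanCaffarelli}(i) yield
\[
v(x_1)-v_{B_r}\le c^{-N/p}\Bigl(\frac{r}{\rho}\Bigr)^{N/p}\nr v-v_{B_r}\nr_{p,B_r}+[v]_{\al,B_r}\Bigl(\frac{\eta}{r}\Bigr)^{\!\al}.
\]
For $x_2$ the mean-value inequality points the wrong way, so H\"older continuity has to be propagated inside the integral: since $|y-x_2|\le\eta+C\rho$ for every $y\in D_\rho(y_2)$, one has $v(y)\le v(x_2)+[v]_{\al,B_r}((\eta+C\rho)/r)^{\al}$, and averaging over $D_\rho(y_2)$ followed by the same bookkeeping produces
\[
v_{B_r}-v(x_2)\le c^{-N/p}\Bigl(\frac{r}{\rho}\Bigr)^{N/p}\nr v-v_{B_r}\nr_{p,B_r}+[v]_{\al,B_r}\Bigl(\frac{\eta+C\rho}{r}\Bigr)^{\!\al}.
\]

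\textbf{Optimisation and main difficulty.} Adding the two displays, setting $\eta=C\rho$ (the smallest admissible value), and abbreviating the resulting right-hand side as $A\,t^{-N/p}+B\,t^{\al}$ with $t:=\rho/r$, $A$ proportional to $c^{-N/p}\nr v-v_{B_r}\nr_{p,B_r}$, and $B$ proportional to $C^\al[v]_{\al,B_r}$, reduces the lemma to the one-variable identity
\[
\min_{t>0}\bigl(A\,t^{-N/p}+B\,t^{\al}\bigr)=\Bigl(1+\frac{\al p}{N}\Bigr)\Bigl(\frac{N}{\al p}\Bigr)^{\!\al p/(N+\al p)}A^{\al p/(N+\al p)}B^{N/(N+\al p)},
\]
which follows from the critical-point equation $t^{\al+N/p}=NA/(\al p B)$. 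Substituting the values of $A$ and $B$ and regrouping the powers of $c$, $C$ and $2$ recovers the claimed exponents $[v]_{\al,B_r}^{N/(N+\al p)}$ and $\nr v-v_{B_r}\nr_{p,B_r}^{\al p/(N+\al p)}$ together with the factor $(C/c)^{\al N/(N+\al p)}$. The delicate point is the one-sided nature of the mean-value inequality for subsolutions, which forces the H\"older correction on the $x_2$ side to use the full diameter $\eta+C\rho$ across $D_\rho(y_2)$ rather than the smaller spacing $\eta$ available for $x_1$; the subadditivity $(a+b)^{\al}\le a^{\al}+b^{\al}$ and the balanced choice $\eta=C\rho$ are precisely what keep the overall multiplicative loss absorbed into the factor $2$ appearing in~\eqref{eq:ball-Lp-stability-generic-v}.
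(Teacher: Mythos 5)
Your overall scheme is the same as the paper's (extremal points on $S_r$, points $y_j$ moved inward along the radius, the mean value property of Theorem \ref{thm:meanCaffarelli} combined with H\"older continuity and the volume bound $|D_\rho|\ge c^N|B_\rho|$, then a one-parameter optimisation in the scale), and both your max-side estimate and the one-variable minimisation identity are correct; however, there are two genuine gaps.

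First, the constant does not come out as claimed. With $\eta=C\rho$ your two H\"older corrections are $(C\rho/r)^\al$ and $(2C\rho/r)^\al$, so in your notation $B=(1+2^\al)\,C^\al\,[v]_{\al,B_r}$ (using subadditivity only worsens this to $3C^\al[v]_{\al,B_r}$), while $A=2c^{-N/p}\nr v-v_{B_r}\nr_{p,B_r}$. Feeding these into your minimisation formula produces the numerical prefactor $2^{\al p/(N+\al p)}(1+2^\al)^{N/(N+\al p)}$ in place of $2$, and since $1+2^\al>2$ for every $\al\in(0,1]$ this is strictly larger than $2$ for all admissible $N,p,\al$ (it approaches $3$ when $\al=1$ and $N\gg\al p$). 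So the asymmetric loss on the $x_2$ side is \emph{not} absorbed into the factor $2$, and what you prove is a weaker inequality than \eqref{eq:ball-Lp-stability-generic-v}. The paper avoids the asymmetry altogether: it applies the mean value inequality of Theorem \ref{thm:meanCaffarelli}(ii) to $|v|$ at $y_j$ (legitimate in the situation where the lemma is actually used, since for $Lv=0$ both $\pm v$, hence $|v|$, are subsolutions), so on \emph{both} sides the H\"older correction involves only the spacing $\si=|x_j-y_j|$ and never the diameter of the averaging set, yielding $2\,[(C/c)^{N/p}\nr v\nr_{p,B_r}(\si/r)^{-N/p}+[v]_{\al,B_r}(\si/r)^\al]$ and hence exactly the stated constant. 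Your min-side treatment is in fact the more careful one if $v$ is merely a subsolution, but then it cannot deliver the stated constant; to recover it you should bound $-v(y_2)$ (equivalently $|v(y_j)|$) through the mean value inequality rather than propagating H\"older continuity across $D_\rho(y_2)$.

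Second, your minimisation over $t=\rho/r$ is performed over all $t>0$, but the construction requires $\eta=C\rho\le r$ (otherwise $y_i$ is not at distance $\eta$ from $S_r$ and $B_{C\rho}(y_i)\not\subset B_r$), i.e. $t$ is confined to a bounded range. When $\nr v-v_{B_r}\nr_{p,B_r}$ is large compared with $[v]_{\al,B_r}$, the critical point $t^*$ violates this constraint and your argument gives nothing. The paper treats this case separately: if the optimal $\si^*$ satisfies $\si^*\ge r$, one uses $\max_{S_r}v-\min_{S_r}v\le 2^\al[v]_{\al,B_r}\le 2^\al[v]_{\al,B_r}(\si^*/r)^\al$ together with the explicit formula for $\si^*$ and the elementary bound $2^\al\le 2(1+\al p/N)$. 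Your proof needs the analogous step to be complete.
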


%
%

\begin{proof}
Without loss of generality, we can assume that $v_{B_r}=0$. Let $x_1$ and $x_2$ be points on $S_r$ that respectively minimize and maximize $v$ on $S_r$ and, for 
$0<\si < r$,
define the two points 
$y_j=x_j-\si x_j/r$, $j=1, 2$.
Notice that $x_j/r$ is the exterior unit normal vector to $S_r$ at the point $x_j$.
\par

By \eqref{seminorm} and the fact that $2 r$ is the diameter of $B_r$, we have that
\begin{equation}
\label{eq:ball_prova_seminorm_genericv}
| v(x_j) | \le |v(y_j)| + [v]_{\al, B_r} \left(\frac{\si}{r}\right)^\al , \ j=1,2.
\end{equation}
%
%
%
Being $0 < \si < r$, we have that $B_{\si }(y_j ) \subset \Om $.
%
%
Thus, we apply Theorem \ref{thm:meanCaffarelli} by choosing $x_0=y_j$, $j=1,2$, and $r=\si/C$. By item (i), we have that 
\begin{equation}\label{eq:ball_inclusions}
B_{\frac{c}{C} \si } (y_j) \subset D_{ \frac{\si}{C}}(y_j ) \subset B_\si  (y_j) \subset B_r , \quad j=1,2.
\end{equation}
Also, item (ii) gives that 
\begin{multline}
\label{eq:ball_mean+Holder}
|v(y_j) | \le \frac{1}{|D_\frac{\si}{C} (y_j )|} \int_{D_\frac{\si}{C} (y_j)} | v | \, dy \le
\\
\frac{1}{ |D_\frac{\si}{C} (y_j)|^{1/p}}  \, \left[\int_{ D_\frac{\si}{C} (y_j) }  |v  |^p\,dy\right]^{1/p} \le 
|B|^{-\frac1{p}}\left(\frac{C}{c\,\si} \right)^{N/p} \, \left(\int_{B_r}|v |^p\,dy\right)^{1/p} .
\end{multline}
The second inequality is a straightforward application of H\"older's inequality and, in the last inequality, we used \eqref{eq:ball_inclusions}, that also gives that
$$
|D_\frac{\si}{C} (y_j )| \ge |B| \left( \frac{c}{C} \right)^N \si^N.
$$

Putting together \eqref{eq:ball_prova_seminorm_genericv} and \eqref{eq:ball_mean+Holder} yields that
\begin{equation}
\label{eq:ball_stepprimadellasceltadisigma}
\max_{S_r} v - \min_{S_r} v \le 2 \, \left[\left( \frac{C}{c} \right)^{N/p}\nr v \nr_{p, B_r} \, \left(\frac{\si}{r}\right)^{-N/p} + [v]_{\al, B_r}\left(\frac{\si}{r}\right)^\al\right],
\end{equation}
for every $0<\si <r$.
\par
Therefore, by minimizing the right-hand side of the last inequality, we can conveniently choose 
\begin{equation}
\label{eq:ball_choicesigma}
\frac{\si^*}{r}=\left[   \frac{N}{\al p}\,\left( \frac{C}{c}\right)^{N/p}  \frac{\nr v \nr_{p, B_r} }{[v]_{\al, B_r} }\right]^{p/(N+ \al p)} 
\end{equation}
and obtain \eqref{eq:ball-Lp-stability-generic-v} if $\si^* < r$.
\par
On the other hand, if $\si^* \ge r$, by \eqref{seminorm} 
we can write:
$$
\max_{S_r} v - \min_{S_r} v \le 2^\al \,[v]_{\al,B_r}  \le 2^\al \,[v]_{\al,B_r} \left(\frac{\si^*}{r}\right)^\al.
$$
Thus, \eqref{eq:ball_choicesigma} gives
$$
\max_{S_r} v - \min_{S_r} v \le 2^\al \left(\frac{N}{\al p}\right)^\frac{ \al p}{N+\al p} \left( \frac{C}{c} \right)^\frac{ \al N}{N+\al p} [v]_{\al,B_r}^{ \frac{N}{N+ \al \, p} } \, \nr v \nr_{p, B_r}^\frac{\al p}{N+ \al p}.
$$
Therefore, \eqref{eq:ball-Lp-stability-generic-v} always holds true, being $2^\al\le 2(1+\al p/N)$.
\end{proof}

\medskip

\begin{proof}[\bf Proof of Theorem \ref{thm:ball}]
Lemma \ref{lem:ball} tells us that \eqref{eq:ball-Lp-stability-generic-v-2} and \eqref{eq:ball_costante_a_N} hold with 
$$
K=\sup\Bigl\{ \max_{S_r} v - \min_{S_r} v : v\in \Si_\al(B_r) \mbox{ with } [v]_{\al,B_r}^{ \frac{N}{N+ \al \, p} } \, \nr v-v_{B_r} \nr_{p, B_r}^\frac{\al p}{N+ \al p}\le 1\Bigr\}.
$$
\par
We are thus left to prove the existence of a $v\in\Si_\al(B_r)$ that attains the supremum. Again, we assume that $v_{B_r}=0$ in the supremum and take a maximizing sequence of functions $v_n$, that is
$$
[v_n]_{\al,B_r}^{ \frac{N}{N+ \al \, p} } \, \nr v_n\nr_{p, B_r}^\frac{\al p}{N+ \al p}\le 1
 \ \mbox{ and } \ \max_{S_r} v_n - \min_{S_r} v_n \to K \ \mbox{ as } \ n\to\infty.
$$ 
Observe that
$$
\nr v_n\nr_{p, B_r}\le 2^\frac{\al N}{N+\al p}, \ n\in\NN,
$$
since
$$
\nr v \nr_{p, B_r}=\nr v - v_{B_r}\nr_{p, B_r}\le 2^\al [v ]_{\al,B_r}, \ v\in\Si_\al(B_r).
$$
\par
We can then extract a subsequence of functions, that we will still denote by $v_n$, that weakly converge in $L^p(B_r)$ to a function $v\in L^p(B_r)$. By the mean value property of
Theorem \ref{thm:meanCaffarelli}, the sequence converges  uniformly to $v$ on the compact subsets of $B_r$, and hence $v$ satisfies the mean value property of Theorem \ref{thm:meanCaffarelli} in $B_r$.
%
The converse of the mean value theorem (see, e.g., \cite[Theorem 1.2]{ArmB})
then gives that $Lv=0$ in $B_r$.
\par
Next, we fix $x_1, x_2\in B_r$ with $x_1\ne x_2$. Since
$$
r^\al\,\frac{|v_n(x_1)-v_n(x_2)|}{|x_1-x_2|^\al}\le [v_n]_{\al, B_r}\le \nr v_n\nr_{p,B_r}^{- \frac{\al p}{N} },
$$
the local uniform convergence and the semicontinuity of the $L^p$-norm with respect to weak convergence give that
$$
r^\al\frac{|v(x_1)-v(x_2)|}{|x_1-x_2|^\al}\le \nr v\nr_{p,B_r}^{ - \frac{\al p}{N} }.
$$
Since $x_1$ and $x_2$ are arbitrary, we infer that $ [v]_{\al, B_r}\,\nr v\nr_{p,B_r}^{ \frac{\al p}{N} }  \le 1$. This means that $v$ extends to a function of class $C^{0,\al}(\ol{B_r})$. 
\par
If we now prove that $v_n\to v$ uniformly on $S_r$, we will have that
$$
K=\lim_{n\to\infty} \left(\max_{S_r} v_n - \min_{S_r} v_n\right)=\max_{S_r} v - \min_{S_r} v,
$$
and the proof would be complete.
For any $x\in S_r$ and $y\in B_r$, we can easily show that
\begin{multline*}
\limsup_{n\to\infty}|v_n(x)-v(x)|\le  r^{-\al}\,|x-y|^\al \limsup_{n\to\infty} [v_n]_{\al, B_r}+|v(y)-v(x)|\le \\
r^{-\al}\,|x-y|^\al \nr v\nr_{p,B_r}^{ - \frac{\al p}{N} }+|v(y)-v(x)|.
\end{multline*}
Since $y\in B_r$ is arbitrary and $v$ is continuous up to $S_r$, the right-hand side can be made arbitrarily small, and hence we infer that $v_n$ converges to $v$ pointwise on $S_r$. The convergence turns out to be uniform on $S_r$. In fact, if $x_n\in S_r$ maximizes $|v_n-v|$ on $S_r$ then by compactness $x_n\to x$ as $n\to\infty$ for some $x\in S_r$, modulo a subsequence. Thus,
\begin{multline*}
\max_{S_r}|v_n-v|=|v_n(x_n)-v(x_n)|\le  \\
r^{-\al}\,|x_n-x|^\al  [v_n]_{\al, B_r}+|v_n(x)-v(x)|+|v(x)-v(x_n)|,
\end{multline*}
and the right-hand side vanishes as $n\to\infty$, by the continuity of $v$ and the pointwise convergence of $v_n$. The proof is complete.
\end{proof}

\subsection{The inequality for smooth domains}
The extension of Theorem \ref{thm:ball} to the case of bounded domains with boundary $\Ga$ of class $C^2$ is not difficult. We recall that such domains satisfy a \textit{uniform interior sphere condition}. In other words,
there exists $r_i>0$ such that for each $z \in \Ga$ there is a ball of radius $r_i$ contained in $\Om$  the closure of which intersects $\Ga$ only at $z$. 

\begin{thm}
\label{thm:C^2 domains}
Take $p\in [1,\infty)$. Let $\Om\subset \RR^N$ be a bounded domain with boundary $\Ga$ of class $C^2$ and let $L$ be the elliptic operator defined by \eqref{ellipticity}-\eqref{operator}. 
\par
If $v\in\Si_\al(\Om)$, then 
\begin{equation}
\label{eq:smooth-Lp-stability-generic-v}  
\max_{\Ga} v - \min_{\Ga} v \le K\, [v]_{\al,\Om}^{ \frac{N}{N+ \al \, p} } \, \nr v-v_{\Om}\nr_{p, \Om}^\frac{\al  p}{N+ \al p}
\end{equation}
for some optimal constant $K$.
Moreover, it holds that
\begin{equation}\label{eq:smooth_explicitboundK}
K\le \max\left[  2 \left(  1 +\frac{ \al p}{N} \right)  ,\left(\frac{d_\Om}{r_i}\right)^\al\right]\left( \frac{N}{\al p}\right)^{\frac{\al p}{N+ \al p}}\,\left( \frac{C}{c} \right)^\frac{\al N}{N+\al p}.
\end{equation}
\end{thm}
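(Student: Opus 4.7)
The proof parallels that of Theorem \ref{thm:ball}: first a quantitative rough bound in the spirit of Lemma \ref{lem:ball}, then a variational compactness argument to upgrade it to the optimal constant. The essential replacement is that the ball's internal structure is now provided, at every boundary point, by the uniform interior sphere condition of radius $r_i$.

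For the rough bound, take $v\in\Si_\al(\Om)$ and, without loss of generality, assume $v_\Om=0$; let $x_1, x_2\in\Ga$ attain the min and max of $v$ on $\Ga$. At each $x_j$ the interior sphere condition provides a ball $B_{r_i}(x_j-r_i\nu(x_j))\subset\Om$ whose closure meets $\Ga$ only at $x_j$, where $\nu(x_j)$ is the exterior unit normal. For $0<\si<r_i$ set $y_j=x_j-\si\,\nu(x_j)$, so that $B_\si(y_j)\subset\Om$. Combining the H\"older estimate (with the global scale $d_\Om/2$)
$$|v(x_j)-v(y_j)|\le [v]_{\al,\Om}\,(2\si/d_\Om)^\al$$
with Theorem \ref{thm:meanCaffarelli} applied at $y_j$ with radius $\si/C$, followed by H\"older's inequality as in \eqref{eq:ball_mean+Holder}, yields
$$|v(y_j)|\le(|\Om|/|B|)^{1/p}\,(C/(c\si))^{N/p}\,\nr v\nr_{p,\Om}.$$
The resulting estimates for $|v(x_1)|$ and $|v(x_2)|$ combine to produce, for every $0<\si<r_i$, an inequality of the form
$$\max_\Ga v-\min_\Ga v\le 2\bigl[A\,\si^{-N/p}+B\,\si^\al\bigr]$$
with $A, B$ explicit in terms of $[v]_{\al,\Om}$, $\nr v\nr_{p,\Om}$, $d_\Om$, $C/c$ and $|\Om|$.

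Optimizing the right-hand side in $\si$ gives the max appearing in \eqref{eq:smooth_explicitboundK}. If the unconstrained minimizer $\si^*$ lies in $(0, r_i)$, the standard one-variable computation reproduces the prefactor $2(1+\al p/N)$ familiar from the ball case. Otherwise $\si^*\ge r_i$, and one appeals to the direct H\"older estimate $\max_\Ga v-\min_\Ga v\le 2^\al [v]_{\al,\Om}$ (valid since $|x_1-x_2|\le d_\Om$), multiplies by the trivial inequality $1\le (\si^*/r_i)^\al$, and substitutes the closed-form expression of $\si^*$: after regrouping, this yields the same rate $[v]_{\al,\Om}^{N/(N+\al p)}\nr v-v_\Om\nr_{p,\Om}^{\al p/(N+\al p)}$ multiplied by $(d_\Om/r_i)^\al$ in place of $2(1+\al p/N)$. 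Taking the max of the two prefactors gives \eqref{eq:smooth_explicitboundK}.

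The existence of an optimal $v$ attaining $K$ follows verbatim from the argument in the proof of Theorem \ref{thm:ball}: a normalized maximizing sequence is weakly compact in $L^p$; Theorem \ref{thm:meanCaffarelli} upgrades weak convergence to uniform convergence on compact subsets of $\Om$ and, via its converse, identifies the limit as a weak solution of $Lv=0$; lower semicontinuity of the H\"older seminorm transfers the seminorm bound to the limit; and the interior sphere condition ensures that every $x\in\Ga$ is the endpoint of a segment of length $r_i$ along which $v$ is H\"older-controlled, which secures uniform convergence on $\Ga$. The main obstacle is Case B of the optimization: the H\"older scale $d_\Om/2$ and the interior-sphere scale $r_i$ coincide up to a constant for a ball but can differ substantially for thin $C^2$ domains, so the $(d_\Om/r_i)^\al$ factor must be isolated cleanly.
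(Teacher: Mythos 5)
Your proposal is correct and follows essentially the same route as the paper: the same interior-sphere construction $y_j=x_j-\si\nu(x_j)$ with $0<\si<r_i$, the same mean-value-plus-H\"older estimate scaled by $d_\Om/2$ via $|\Om|\le|B|(d_\Om/2)^N$, the same optimization in $\si$ with the two cases $\si^*<r_i$ and $\si^*\ge r_i$ producing the max in \eqref{eq:smooth_explicitboundK}, and the same reliance on the variational argument of Theorem \ref{thm:ball} for the optimal constant.
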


\begin{proof}
The proof runs similarly to that of Theorem \ref{thm:ball}. We just have to make some necessary changes to the proof of Lemma \ref{lem:ball}, 
\par
We take $x_1$ and $x_2$ in $\Ga$ that respectively minimize and maximize $v$ on $\Ga$ and define the corresponding $y_1, y_2$ by
$y_j=x_j-\si \nu (x_j)$, $j=1, 2$,
where $\nu(x_j)$ is the exterior unit normal vector to $\Ga$ at the point $x_j$. This time we use the restriction $0<\si < r_i$, so that $B_\si(y_j)\subset\Om$, $j=1, 2$.
\par
Next, we must replace \eqref{eq:ball_prova_seminorm_genericv} by
\begin{equation}\label{eq:smooth-disuguaglianzaconseminorma}
| v(x_j) | \le |v(y_j)| + [v]_{\al, \Om} \left(\frac{2\si}{d_\Om}\right)^\al , \ j=1,2,
\end{equation}
and
\eqref{eq:ball_mean+Holder} by
$$
|v(y_j) | \le 
|B|^{-\frac1{p}}\left(\frac{C}{c\,\si} \right)^{N/p} \, \left(\int_\Om|v |^p\,dy\right)^{1/p} , \ j=1,2.
$$
Thus, we arrive at 
\begin{equation}\label{eq:smooth_provoarichiamarla}
\max_{ \Ga } v - \min_{ \Ga } v \le 2 \, \left[\left( \frac{C}{c} \right)^{N/p}\nr v \nr_{p, \Om} \, \left(\frac{2\si}{d_\Om}\right)^{-N/p} + [v]_{\al, \Om}\left(\frac{2\si}{d_\Om}\right)^\al\right]
\end{equation}
for $0<\si< r_i$, in place of \eqref{eq:ball_stepprimadellasceltadisigma}. Here, we used that $|\Om|\le |B|\, (d_\Om/2)^N$.
%
%

By minimizing the right-hand side of \eqref{eq:smooth_provoarichiamarla}, this time we can choose
$$
\frac{2\si^*}{d_\Om}=\left[  \frac{N}{\al p}\, \left(\frac{C}{c}\right)^{N/p}  \frac{\nr v \nr_{p, \Om} }{[v]_{\al,\Om} }\right]^{p/(N+ \al p)} ,
$$
%
%
and obtain \eqref{eq:smooth-Lp-stability-generic-v} and \eqref{eq:smooth_explicitboundK} if $\si^*<r_i$.

On the other hand, if $\si^* \ge r_i$, \eqref{seminorm} gives:
\begin{multline*}
\max_{\Ga} v - \min_{\Ga} v  \le 2^\al\,[v]_{\al,\Om}\le \left( \frac{2\si^*}{r_i}\right)^\al [v]_{\al,\Om}=\\
\left( \frac{N}{\al p} \right)^{ \frac{\al p}{N+\al p} } \left( \frac{C}{c}\right)^\frac{\al N}{N+\al p}
\left( \frac{d_\Om}{r_i}\right)^\al [v]_{\al,\Om}^{ \frac{N}{N+ \al \, p} } \, \nr v \nr_{p, \Om}^\frac{\al  p}{N+ \al p}.
\end{multline*}
Again, \eqref{eq:smooth-Lp-stability-generic-v} and \eqref{eq:smooth_explicitboundK} hold true.
\end{proof}

\begin{rem}
{\rm
Theorem \ref{thm:C^2 domains} can be compared with \cite[Lemma 3.14]{PogTesi},
that was proved for the Laplace operator. In that case, we have that $c=C=1$ and the seminorm in \eqref{seminorm} can be replaced by the maximum of $(d_\Om/2)\,|\na v|$ on $\Ga$, provided $\Ga$ is sufficiently smooth.}
\end{rem}

\section{The inequality for two classes of non-smooth domains}
\label{sec:cone-john}

In this section, for future reference, we consider and carry out some details for two cases of domains with non-smooth boundary. 

\subsection{Domains with corners}
Given $\te\in [0, \pi /2]$ and $h>0$, we say that $\Om$ satisfies the \textit{$(\te, h)$-uniform interior cone condition}, if for every $x \in \Ga$ there exists a finite right spherical cone $\cC_x$ (with vertex at $x$ and axis in some direction $e_x$), having opening width $\te$ and height $h$, such that 
$$
\cC_x \subset \ol{\Om} \quad \text{and} \quad \ol{\cC}_x \cap \Ga = \left\lbrace x \right\rbrace.
$$

\begin{thm}
\label{thm:cone_Lp-estimate-oscillation-generic-v}
Take $p\in [1,\infty)$. Let $\Om\subset \RR^N$ be a bounded domain satisfying the $(\theta, h)$-uniform interior cone condition and let $L$ be the elliptic operator defined by \eqref{ellipticity}-\eqref{operator}. 
\par
If $v\in\Si_\al(\Om)$, then
%
%
%
\eqref{eq:smooth-Lp-stability-generic-v} holds true
for some optimal constant $K$.
Moreover, we have that
\begin{equation}\label{eq:cone_explicitboundK}
K \le \max\left[2 \left( 1 +\frac{ \al p}{N} \right) , \left(\frac{d_\Om}{h}\right)^\al \left( 1+\sin{ \theta } \right)^\al \right]\left( \frac{N}{\al p}\right)^{\frac{\al p}{N+ \al p}}\,\left( \frac{C}{c \, \sin{ \theta } } \right)^\frac{\al N}{N+\al p}.
\end{equation}
\end{thm}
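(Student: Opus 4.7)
My plan is to adapt the proof of Theorem \ref{thm:C^2 domains} to the cone setting, replacing the interior ball of radius $r_i$ guaranteed there by the uniform interior sphere condition with a ball inscribed in the interior cone $\cC_x$ produced by the $(\te,h)$-cone condition. The existence of the optimal constant $K$ follows verbatim from the compactness argument of the proof of Theorem \ref{thm:ball}, so all the effort goes into producing the rough estimate that leads to \eqref{eq:cone_explicitboundK}.

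First I would pick $x_1,x_2\in\Ga$ that respectively minimize and maximize $v$ on $\Ga$ (assuming $v_\Om=0$), let $e_{x_j}$ denote the inward axis direction of $\cC_{x_j}$, and set $y_j=x_j+\si\, e_{x_j}$. An elementary Euclidean computation shows that the distance from a point at height $\si$ on the axis of a right spherical cone of half-opening $\te$ to the lateral boundary equals $\si\sin\te$, so the constraint $\si(1+\sin\te)\le h$ guarantees $B_{\si\sin\te}(y_j)\subset\cC_{x_j}\subset\Om$. The two key ingredients of Lemma \ref{lem:ball} then transfer with only cosmetic changes: the H\"older bound
\[
|v(x_j)|\le |v(y_j)|+[v]_{\al,\Om}(2\si/d_\Om)^\al
\]
is identical to \eqref{eq:smooth-disuguaglianzaconseminorma}, while applying Theorem \ref{thm:meanCaffarelli} at $y_j$ with radius $\si\sin\te/C$ (admissible because $\dist(y_j,\Ga)\ge\si\sin\te$), together with H\"older's inequality and $|\Om|\le|B|(d_\Om/2)^N$, gives
\[
|v(y_j)|\le\Bigl(\frac{C}{c\sin\te}\Bigr)^{N/p}\Bigl(\frac{d_\Om}{2\si}\Bigr)^{N/p}\nr v\nr_{p,\Om}.
\]

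Combining the two estimates and minimizing in $t=2\si/d_\Om$ produces a critical value $t^*$ characterized by
\[
(t^*)^{\al+N/p}=\frac{N}{\al p}\Bigl(\frac{C}{c\sin\te}\Bigr)^{N/p}\frac{\nr v\nr_{p,\Om}}{[v]_{\al,\Om}}.
\]
If the corresponding $\si^*$ lies below the admissibility threshold $h/(1+\sin\te)$, the minimum is attainable and contributes the first argument $2(1+\al p/N)$ of the max in \eqref{eq:cone_explicitboundK}. Otherwise, I would fall back on the trivial bound $\max_\Ga v-\min_\Ga v\le 2^\al[v]_{\al,\Om}$ and exploit $\si^*\ge h/(1+\sin\te)$ to write $2^\al\le(t^*)^\al\bigl(d_\Om(1+\sin\te)/h\bigr)^\al$, which after substituting $t^*$ yields the second argument $(d_\Om/h)^\al(1+\sin\te)^\al$.

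The single genuinely new geometric ingredient, and the origin of the factor $1/\sin\te$ appearing inside $(C/(c\sin\te))^{\al N/(N+\al p)}$, is that the largest ball one can inscribe along the cone's axis has radius $\si\sin\te$ rather than $\si$. I do not expect any serious obstacle beyond the careful bookkeeping needed to collapse the two cases into the single $\max$ appearing in \eqref{eq:cone_explicitboundK}.
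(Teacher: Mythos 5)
Your proposal is correct and follows essentially the same route as the paper: the same choice of points $y_j$ on the cone axis with $0<\si<h/(1+\sin\te)$, the same inscribed ball $B_{\si\sin\te}(y_j)$ feeding Theorem \ref{thm:meanCaffarelli} with radius $\si\sin\te/C$, the same minimization in $2\si/d_\Om$, and the same fallback via $2^\al\le\bigl(2\si^*(1+\sin\te)/h\bigr)^\al$ when $\si^*$ is inadmissible, with existence of the optimal $K$ inherited from the compactness argument of Theorem \ref{thm:ball}. No gaps.
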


\begin{proof}
The proof runs similarly to that of Theorem \ref{thm:C^2 domains}. We just have to take care of the bound for $K$.
\par
Let $x_1$ and $x_2$ be the usual extremum points for $v$ on $\Ga$. This time, instead, we define the two points $y_j=x_j-\si e_{x_j}$, $j=1, 2$, for
$$
0<\si < \frac{h}{1+ \sin\te}.
$$
Notice that, in view of the $(\theta, h)$-uniform interior cone condition, the ball $B_{\si\sin{\theta} }(y_j)$ is contained in $\Om$. 
Thus, by proceeding as in the proof of Theorem \ref{thm:C^2 domains} (this time applying Theorem \ref{thm:meanCaffarelli} with $r= \frac{\sin{\theta} }{C} \si$ and $x_0 = y_j$, $j=1,2$), we arrive at the inequality
\begin{equation*}
\max_{\Ga} v - \min_{\Ga} v \le 2 \, \left[ \left( \frac{ C   }{ c \, \sin{\theta} } \right)^{N/p} \nr v \nr_{p, \Om} \left(\frac{2\si}{d_\Om}\right)^{-N/p} + [v]_{\al, \Om}\left(\frac{2\si}{d_\Om}\right)^\al \right] ,
\end{equation*}
for every $0<\si < h/(1+ \sin\te)$. Hence, this time we can choose
$$
\frac{2\si^*}{d_\Om}= \left[  \frac{N}{\al p}\, \left(\frac{C}{c \, \sin{\theta} }\right)^{N/p}  \frac{\nr v \nr_{p, \Om} }{[v]_{\al,\Om} }\right]^{p/(N+ \al p)} ,
$$
and obtain \eqref{eq:smooth-Lp-stability-generic-v} and \eqref{eq:cone_explicitboundK} if $\si^* < h/(1+ \sin\te)$.
\par
On the other hand, if $\si^* \ge h/(1+ \sin\te)$, by \eqref{seminorm} we have that
\begin{multline*}
\max_{\Ga} v - \min_{\Ga} v  \le 2^\al\,[v]_{\al,\Om} \le \left( \frac{2 \si^*}{h}\right)^\al \left( 1+\sin{\theta} \right)^\al [v]_{\al,\Om}= \\
\left( \frac{N}{\al p} \right)^{ \frac{\al p}{N+\al p} } \left( \frac{C}{c \, \sin{\theta} }\right)^\frac{\al N}{N+\al p}
\left( \frac{d_\Om}{h}\right)^\al \left( 1+ \sin{\theta} \right)^\al [v]_{\al,\Om}^{ \frac{N}{N+ \al \, p} } \, \nr v \nr_{p, \Om}^\frac{\al  p}{N+ \al p}.
\end{multline*}
Again, \eqref{eq:smooth-Lp-stability-generic-v} and \eqref{eq:cone_explicitboundK} hold true.
\end{proof}

\subsection{Locally John's domains}
\label{sec:John}
Following \cite[Definition 3.1.12]{HMT}, we say that a bounded domain $\Om \subset \RR^N$ satisfies the {\it $(b_0,R)$-local John condition} if there exist two constants, $b_0 > 1$ and $R>0$, with the following properties. For every $x \in \Ga$ and $r \in \left( 0,R \right]$ we can find $x_r \in B_r (x) \cap \Om$ such that 
$B_{r/b_0}(x_r) \subset \Om$.
Also, for each $z$ in the set $\De_r (x)$ defined by $B_r(x)\cap\Ga$, we can find a rectifiable path 
$\ga_z : [0, 1] \to \ol{\Om}$, with length $ \le b_0 r$, such that
$\ga_z (0) = z$, $\ga_z (1) = x_r$, and
\begin{equation}\label{eq:localJohncondition}
\dist(\ga_z (t), \Ga ) >\frac{|\ga_z (t) - z|}{b_0} \ \text{ for any } \ t>0.
\end{equation}
The constants $b_0, R$, the point $x_r$, and the curve $\ga_z$ are respectively called \textit{John's constants, John's center (of $\De_r(x)$), and John's path}.
The class of domains satisfying the local John condition is huge and contains, among others, the so-called \textit{non-tangentially accessible} domains (see \cite[Lemma 3.1.13]{HMT}).

\begin{thm}
\label{thm:John-Lp-estimate-oscillation-generic-v}
Take $p\in [1,\infty)$. Let $\Om\subset \RR^N$ be a bounded domain satisfying the $(b_0,R)$-local John condition and let $L$ be the elliptic operator defined by \eqref{ellipticity}-\eqref{operator}. 
\par
If $v\in\Si_\al(\Om)$, then 
\eqref{eq:smooth-Lp-stability-generic-v} holds true for some optimal constant $K$.
Moreover, we have that
\begin{equation}
\label{eq:John_explicitboundK}
K \le \max\left[2 \left( 1 +\frac{ \al p}{N} \right) , \left(\frac{d_\Om b_0}{R}\right)^\al \right] \left( \frac{N}{\al p}\right)^{\frac{\al p}{N+ \al p}}
\,\left( \frac{C b_0 }{c } \right)^\frac{\al N}{N+\al p}.
\end{equation}
\end{thm}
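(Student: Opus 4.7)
The plan is to adapt, almost verbatim, the scheme of Theorem~\ref{thm:cone_Lp-estimate-oscillation-generic-v}: first prove a John analog of Lemma~\ref{lem:ball} giving the rough oscillation estimate, and then deduce the existence and optimality of $K$ by repeating the variational and compactness argument from the second half of the proof of Theorem~\ref{thm:ball}. That second part transfers without any structural change, since it uses only the H\"older seminorm control, the mean value property of Theorem~\ref{thm:meanCaffarelli}, its converse, and weak $L^p$ compactness, none of which cares about the shape of $\Om$. So the real work is producing the rough estimate with the right constants.

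For the rough estimate, let $x_1, x_2 \in \Ga$ realize $\min_\Ga v$ and $\max_\Ga v$. For each $\si \in (0, R]$, the $(b_0, R)$-local John condition applied at $x_j$ with John radius $r = \si$ supplies a point $y_j \in B_\si(x_j)\cap\Om$ with $B_{\si/b_0}(y_j)\subset \Om$. Since $|x_j - y_j| < \si$, the H\"older seminorm \eqref{seminorm} gives $|v(x_j) - v(y_j)| \le [v]_{\al,\Om}(2\si/d_\Om)^\al$. Observe that for this step only the existence of the John center is used; the John path and \eqref{eq:localJohncondition} are irrelevant.

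Next, assuming without loss of generality that $v_\Om = 0$, I apply Theorem~\ref{thm:meanCaffarelli} at $x_0 = y_j$ with radius $\si/(b_0 C)$, which is admissible because $\dist(y_j,\Ga) \ge \si/b_0$. Using the inclusion $B_{c\si/(b_0 C)}(y_j) \subset D_{\si/(b_0 C)}(y_j) \subset B_{\si/b_0}(y_j) \subset \Om$, H\"older's inequality on the mean value identity, and the isodiametric bound $|\Om|\le|B|(d_\Om/2)^N$, one obtains
$$
|v(y_j)| \le \left(\frac{C b_0}{c}\right)^{N/p}\left(\frac{2\si}{d_\Om}\right)^{-N/p} \nr v \nr_{p,\Om}.
$$
Combining this with the H\"older control on $|v(x_j)-v(y_j)|$ yields, for every $\si \in (0, R]$,
$$
\max_\Ga v - \min_\Ga v \le 2\left[\left(\frac{C b_0}{c}\right)^{N/p} \nr v \nr_{p,\Om}\left(\frac{2\si}{d_\Om}\right)^{-N/p} + [v]_{\al,\Om}\left(\frac{2\si}{d_\Om}\right)^\al\right],
$$
the exact analog of \eqref{eq:ball_stepprimadellasceltadisigma} and \eqref{eq:smooth_provoarichiamarla}. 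The right-hand side is now minimized in $\si$ just as there: the critical value $\si^*$, if $\le R$, delivers the first entry of the $\max$ in \eqref{eq:John_explicitboundK}, while the fallback case $\si^* > R$ is handled via $2^\al[v]_{\al,\Om}\le(2\si^* b_0/R)^\al[v]_{\al,\Om}$ and substitution of the explicit form of $\si^*$.

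The main obstacle I anticipate is purely cosmetic: picking the parametrization (John radius $\si$ versus interior ball radius $\si/b_0$) so that the $b_0$ factors produced by $|D_{\si/(b_0 C)}(y_j)|\ge|B|(c\si/(b_0 C))^N$ combine cleanly with those coming from the fallback bound, reproducing both the $(Cb_0/c)^{\al N/(N+\al p)}$ and the $(d_\Om b_0/R)^\al$ in \eqref{eq:John_explicitboundK}. Once the rough estimate is in hand, the compactness and optimality part is copy-paste from Theorem~\ref{thm:ball}: a maximizing sequence has uniformly bounded $L^p$-norm, extracts a weakly $L^p$-convergent subsequence, Theorem~\ref{thm:meanCaffarelli} upgrades this to locally uniform convergence and hence $Lv=0$ via the converse mean value theorem, lower semicontinuity preserves the H\"older constraint, and pointwise-plus-equicontinuity promotes convergence to $\Ga$.
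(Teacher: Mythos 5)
Your argument is correct and follows the paper's overall scheme (rough oscillation estimate, optimization in $\si$, then the variational/compactness argument copied from Theorem \ref{thm:ball}), but it differs from the paper at the one genuinely geometric step. The paper works at the single scale $R$: it takes the John path $\ga_x$ to the John center $x_R$ of $\De_R(x)$, uses $\dist(x_R,\Ga)>R/b_0$ to find a point $y$ on $\ga_x$ with $|x-y|=\si$, and invokes \eqref{eq:localJohncondition} to get $B_{\si/b_0}(y)\subset\Om$; this forces the restriction $0<\si<R/b_0$ and the threshold $\si^*\ge R/b_0$ in the fallback case. You instead use, at every scale $\si\in(0,R]$, only the ``corkscrew'' part of the definition (the John center $x_\si$ of $\De_\si(x)$ with $B_{\si/b_0}(x_\si)\subset\Om$), never touching the path or \eqref{eq:localJohncondition}. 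This is a slightly weaker hypothesis and yields the same intermediate inequality on the larger range $\si\le R$; your fallback case $\si^*>R$ still gives $2^\al\le(2\si^*b_0/R)^\al$ because $b_0>1$, so \eqref{eq:John_explicitboundK} follows (in fact your route would even allow $(d_\Om/R)^\al$ in place of $(d_\Om b_0/R)^\al$ there, i.e.\ a marginally better constant). The only technical point to keep an eye on, common to your write-up and the paper, is the borderline admissibility $r=\si/(b_0C)$ versus $r_0\ge\dist(y_j,\Ga)/C$ in Theorem \ref{thm:meanCaffarelli}, which is harmless (take $r$ slightly smaller and pass to the limit). Your closing remark that the compactness and optimality part transfers verbatim is exactly how the paper handles it.
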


\begin{proof}
Let $x$ be one of the usual extremum points for $v$ on $\Ga$. Let $\ga_x$ be a John's path from $x$ to the John's center $x_R$ of $\De_R(x)$. 
Since $B_{ R /b_0}(x_R) \subset \Om$ we have that
$$ 
| x - x_R | \ge \dist(x_R,\Ga)>\frac{R}{b_0}.
$$  
Thus, for $0<\si < R/b_0$,
we can find a point $y$ on the John's curve $\ga_x$ such that 
$ |x-y| = \si$.
Hence, by \eqref{seminorm} we have that \eqref{eq:smooth-disuguaglianzaconseminorma} still holds true.

In view of \eqref{eq:localJohncondition} we have that $B_{\si / b_0}(y)\subset\Om$. 
Thus, as done to obtain \eqref{eq:ball_mean+Holder} (this time applying Theorem \ref{thm:meanCaffarelli} with $r= \si / (C b_0)$ and $x_0 = y$),
we get that
$$
|v(y) | \le \left(\frac{|\Om|}{|B|}\right)^{\frac{1}{p}}  \left[ \frac{C \, b_0}{ c \, \si  } \right]^{N/p} \, \nr v\nr_{p,\Om}.
$$

This, \eqref{eq:smooth-disuguaglianzaconseminorma}, and the inequality $| \Om | \le |B| \left( d_\Om /2  \right)^N$ then yield that
\begin{equation*}
\max_{\Ga} v - \min_{\Ga} v \le 2 \, \left[ \left( \frac{ C \, b_0   }{ c } \right)^{N/p} \left( \frac{2 \, \si}{d_\Om} \right)^{- N/p} \nr v \nr_{p, \Om} + [v]_{\al,\Om} \,
 \left( \frac{2 \, \si}{d_\Om} \right)^\al   \right] ,
\end{equation*}
for every $0<\si < R/b_0$.
Hence, this time we can choose 
\begin{equation}\label{eq:John_choicesigma}
\frac{2 \, \si^* }{d_\Om} = \left[ \frac{N}{\al \, p }  \left( \frac{ C \, b_0   }{ c } \right)^{N/p} \frac{ \nr v \nr_{p, \Om} }{ [v]_{\al , \Om}   } \right]^{p/(N+ \al p)}  ,
\end{equation}
and have that \eqref{eq:smooth-Lp-stability-generic-v} and \eqref{eq:John_explicitboundK} hold true if 
$\si^* < R/b_0$. 
\par
On the other hand if $\si^* \ge R/b_0$,
since by \eqref{seminorm} it holds that
$$ \max_{\Ga} v - \min_{\Ga} v  = v(x_1) - v(x_2) \le 2^\al \, [v]_{\al,\Om} 
\left( \frac{\si^* b_0 }{R}\right)^\al \le  [v]_{\al,\Om} \left(2 \, \si^*\right)^\al \left( \frac{b_0}{R} \right)^\al ,$$
by \eqref{eq:John_choicesigma} we immediately get
\begin{equation*}
\max_{\Ga} v - \min_{\Ga} v \le \left( \frac{d_\Om b_0}{R} \right)^\al \left( \frac{N}{\al p}\right)^{\frac{\al p}{N+ \al p}}\,
\left( \frac{C b_0 }{c } \right)^\frac{\al N}{N+\al p} \,  [v]_{\al,\Om}^{ \frac{N}{N+ \al \, p} } 
\, \nr v \nr_{p, \Om}^{ \al p/(N+ \al p)} .
\end{equation*}
Hence, \eqref{eq:smooth-Lp-stability-generic-v} and \eqref{eq:John_explicitboundK} still hold true.
\end{proof}

\section*{Acknowledgements}
The authors wish to thank Ivan Blank for useful discussions.
%
The authors are partially supported by the Gruppo Nazionale di Analisi Matematica, Probabilità e Applicazioni (GNAMPA) of the Istituto Nazionale di Alta Matematica (INdAM). The second author is member of AustMS and is supported by the Australian Laureate Fellowship FL190100081 "Minimal surfaces, free boundaries and partial differential equations" and the Australian Research Council Discovery Project DP170104880 "N.E.W. Nonlocal Equations at Work".

\bibliographystyle{plain}

\bibliography{References}

\end{document}